\documentclass{article}

\usepackage{amsfonts}
\usepackage{amssymb}
\usepackage[centertags]{amsmath}
\usepackage{amsfonts}
\usepackage{amssymb}
\usepackage{amsthm}
\usepackage{newlfont}

\newtheorem{theorem}{Theorem}

\newtheorem{lemma}{Lemma}

\newtheorem{corollary}{Corollary}

\theoremstyle{definition}
\newtheorem{definition}{Definition}

\newcommand{\inn}{\mathrm{ int}\,}
\newfont{\gothic}{eufm9 scaled 1200}
\textwidth 13.5cm
%\textheight 23cm \topmargin 0.1cm \oddsidemargin
%0cm \evensidemargin 0cm

\thispagestyle{plain}
\begin{document}
\title{$D$--measurability and $t$-Wright convex functions}
\author{Eliza Jab{\l}o\'{n}ska}
\maketitle

\begin{abstract}
In the paper we will prove that each $t$-Wright convex function, which is bounded above on a $D$-measurable non-Haar meager set is continuous. Our paper refers to papers \cite{Olbrys}, \cite{Jablonska} and a problem posed by K.~Baron and R.~Ger.
\end{abstract}

\section{Introduction}
In 1972\footnote{{\em AMS Mathematics Subject Classification (2010):} 26A51, 39B62.\\{\em Keywords and phrases: t-Wright convex function, non-Haar meager set, D-meausrable set, D-measurable function}} J.P.R.~Christensen defined \textit{Haar null} sets in an abelian Polish group (which are also called Christensen zero sets) in such a way that in a locally compact group it is equivalent to the notion of Haar measure zero sets. P.~Fischer and Z. S{\l}odkowski \cite{FS} used his idea to define Christensen measurable sets and Christensen measurable functions.

It is well known (see \cite[Theorem 2]{Ch}, \cite[Example 6.2]{Mat}) that $0\in\inn (A-A)$ for each Christensen measurable nonzero set $A$ in an abelian Polish group $X$, as well as there is a Christensen measurable nonzero set $A\subset X$ such that $\inn (A+A)=\emptyset$. In fact it means that in an abelian Polish group the Steinhaus theorem holds, but a generalization of the Steinhaus theorem doesn't hold.

In 2013 U.B.~Darji introduced another family of "small" sets in an abelian Polish group $X$, which is equivalent to the notion of meager sets in a locally compact group; he called a set $A\subset X$ \textit{Haar meager} if there is a Borel set $B\subset X$ with $A\subset B$, a compact metric space $K$ and a continuous function $f:K\to X$ such that $f^{-1}(B+x)$ is meager in $K$ for all $x\in X$. He also proved that the family of all Haar meager sets is a $\sigma$-ideal.

In \cite{Jablonska} we introduced the definition of $D$-measurable sets and showed that the family of all $D$-measurable sets is a $\sigma$-algebra. Moreover, we proved that this family has two important properties analogous two the family of Christensen measurable sets:
\begin{itemize}
\item $0\in\inn (A-A)$ for each $D$-measurable non-Haar meager set $A\subset X$,
\item there is a $D$-measurable non-Haar meager set $A\subset X$ such that $\inn (A+A)=\emptyset$,
\end{itemize}
i.e. the Piccard theorem holds and a generalized Piccard's theorem doesn't hold in an abelian Polish group.

That is why we can say that the notation of $D$-measurability is a topological analog of Christensen measurability in an abelian Polish group (not necessary locally compact).

During the 21st International Symposium on Functional Equations (1983, Konolfingen, Switzerland) K. Baron and R. Ger posed the following problem:

 \textit{Does each Jensen convex function bounded above on a nonzero Christensen measurable set have to be continuous?}

 In 2013 A.~Olbry\'{s} \cite{Olbrys} answered the question. More precisely, he proved that for a nonempty convex open subset $D$ of a real linear Polish space each $t$-Wright convex function $f:D\to\mathbb{R}$, i.e. function satisfying the following condition
 $$
 \bigwedge_{x,y\in D} f(tx+(1-t)y)+f((1-t)x+ty)\leq f(x)+f(y)
 $$
 with a fixed number $t\in(0,1)$, bounded above on a nonzero Christensen measurable set $T\subset D$ has to be continuous.
Consequently, taking $t=\frac{1}{2}$ we obtain this result for Jensen convex functions.

 Here we will show that this theorem has its topological analog; i.e. each $t$-Wright convex function $f:D\to\mathbb{R}$ bounded above on a non-Haar meager $D$-measurable set $T\subset D$ is continuous.

 Let us mention that there are also known another conditions implying the continuity and convexity of $t$-Wright convex functions such as
lower semicontinuity \cite{Olbrys2},
continuity at one point \cite{Kominek}, \cite{Olbrys2},
Baire or Lebesgue masurability \cite{Olbrys1}-\cite{Olbrys2}.

\section{The main result}

Let $X$ be a real linear Polish space. First let us recall some necessary definitions and theorems on $D$-measurability from \cite{Darji} and \cite{Jablonska}.

\begin{definition}\cite[Definition~2.1]{Darji}, \cite[Definition~1]{Jablonska}
 A set $A\subset X$ is \textit{Haar meager} if there is a Borel set $B\subset X$ with $A\subset B$, a compact metric space $K$ and a continuous function
 $f:K\to X$ such that $f^{-1}(B+x)$ is meager in $K$ for all $x\in X$.
A set $A\subset X$ is  \textit{$D$-measurable} if $A=B\cup M$ for a Haar meager set $M\subset X$ and a Borel set
$B\subset X$.
\end{definition}

\begin{definition}\cite[Definition~2]{Jablonska}
 Let $X$ be an abelian Polish group and $Y$ be a topological space. A~mapping $f:X\to Y$ is a \textit{$D$-measurable function} if $f^{-1}(U)\in\mathcal{D}$ in $X$ for each open set $U\subset Y$.
\end{definition}

\begin{theorem}{\em \cite[Theorem~2.9]{Darji}}\label{t1}
The family $\mathcal{HM}$ of all Haar meager sets is a $\sigma$--ideal.
\end{theorem}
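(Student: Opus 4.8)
The plan is to verify directly the three defining properties of a $\sigma$-ideal for the family $\mathcal{HM}$: that $\emptyset\in\mathcal{HM}$, that $\mathcal{HM}$ is hereditary (closed under taking subsets), and that $\mathcal{HM}$ is closed under countable unions. The first two are immediate from the definition. For the empty set, choose $B=\emptyset$, any nonempty compact metric space $K$ and any continuous $f\colon K\to X$; then $f^{-1}(B+x)=\emptyset$ is meager for every $x$. For heredity, if $A\in\mathcal{HM}$ is witnessed by a Borel set $B\supset A$, a compact metric space $K$ and a continuous $f$, and if $A'\subset A$, then the very same triple $(B,K,f)$ witnesses $A'\in\mathcal{HM}$, since $A'\subset A\subset B$. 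So all the content lies in countable additivity.

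For the countable union, let $A_n\in\mathcal{HM}$ be witnessed by $(B_n,K_n,f_n)$, where $B_n\supset A_n$ is Borel, $K_n$ is a nonempty compact metric space, and $f_n\colon K_n\to X$ is continuous with $f_n^{-1}(B_n+x)$ meager in $K_n$ for every $x\in X$. I would take $B=\bigcup_n B_n$, which is Borel and contains $\bigcup_n A_n$, and $K=\prod_n K_n$, which is again a nonempty compact metric space. The heart of the argument is to build a continuous $f\colon K\to X$ whose $n$-th coordinate contributes exactly $f_n$, i.e. of the ``coordinatewise separated'' form
\[
f(k)=f_n(k_n)+\psi_n\big((k_m)_{m\neq n}\big)\qquad(n\in\mathbb{N}),
\]
with each $\psi_n$ continuous in the remaining coordinates; the canonical candidate is the sum $f(k)=\sum_m f_m(k_m)$. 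Granting such an $f$, fix $n$ and $x\in X$ and consider $E_n=\{k\in K:f(k)\in B_n+x\}$. For each fixed value of $(k_m)_{m\neq n}$, the $K_n$-section of $E_n$ is $f_n^{-1}\big(B_n+x-\psi_n((k_m)_{m\neq n})\big)$, which is meager in $K_n$ by the choice of $f_n$. Since $f$ is continuous and $B_n$ is Borel, $E_n$ is Borel, hence has the Baire property, so the Kuratowski--Ulam theorem yields that $E_n$ is meager in $K$. Finally $f^{-1}(B+x)=\bigcup_n E_n$ is a countable union of meager sets in the Baire space $K$, hence meager; thus $(B,K,f)$ witnesses $\bigcup_n A_n\in\mathcal{HM}$.

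The main obstacle is exactly the construction of the continuous separated function $f$: while the formal sum $\sum_m f_m(k_m)$ has the required shape, there is no reason for it to converge in the Polish group $X$ for arbitrary witnesses. The remedy is to normalize each witness before forming the product, replacing $(B_n,K_n,f_n)$ by an equivalent witness for the same set whose range is small. Translating $f_n$ by a constant $a\in X$ preserves the witnessing property, since $(f_n-a)^{-1}(B_n+x)=f_n^{-1}(B_n+(x+a))$ is still meager; the delicate point is to shrink the range of $f_n$ into a prescribed neighbourhood of the identity while keeping all the preimages $f_n^{-1}(B_n+x)$ meager, so that, using a complete invariant metric on $X$, the partial sums become uniformly Cauchy and $f$ is continuous. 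Carrying out this normalization carefully --- so that no degenerate restriction of the domain destroys meagerness of the sections --- is the crux of the proof; once it is in place, the Kuratowski--Ulam step above closes the argument.
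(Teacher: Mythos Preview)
The paper does not prove this theorem at all: it merely quotes it as \cite[Theorem~2.9]{Darji} and uses it as a black box, so there is no ``paper's own proof'' to compare with. Your sketch is essentially Darji's original argument, and the overall strategy (product of the witnessing spaces, sum of the witnessing maps, Kuratowski--Ulam on each factor) is correct.

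Since you flagged the normalization step as the one point you left open, here is how it is actually done so that no meagerness is lost. First replace each $K_n$ by $2^{\omega}$ using that every nonempty compact metric space is a continuous image of $2^{\omega}$; precomposing $f_n$ with such a surjection preserves the witness property because the preimage of a meager set under a continuous surjection between compact metric spaces is meager. Now fix a compatible left-invariant metric on $X$, pick a point $p_n\in 2^{\omega}$, translate so that $f_n(p_n)=0$, and choose a basic clopen neighbourhood $[s_n]\ni p_n$ on which $f_n$ has diameter $<2^{-n}$. Since $[s_n]$ is open in $2^{\omega}$, the intersection of a meager subset of $2^{\omega}$ with $[s_n]$ is meager in $[s_n]$; hence $(f_n|_{[s_n]})^{-1}(B_n+x)$ is still meager for every $x$, and $[s_n]$ is again homeomorphic to $2^{\omega}$. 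After this normalization the series $f(k)=\sum_m f_m(k_m)$ converges uniformly, and your Kuratowski--Ulam computation goes through verbatim.
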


\begin{theorem}{\em \cite[Theorem~6]{Jablonska}}\label{t2}
The family $\mathcal{D}$ of all $D$-measurable sets is a $\sigma$--algebra.
\end{theorem}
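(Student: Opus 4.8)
The plan is to verify the three closure properties defining a $\sigma$-algebra for $\mathcal{D}=\{B\cup M: B \text{ is Borel and } M\in\mathcal{HM}\}$, leaning only on the facts that the Borel sets form a $\sigma$-algebra and that $\mathcal{HM}$ is a $\sigma$-ideal (Theorem \ref{t1}). Two of these are immediate. Since $X=X\cup\emptyset$ with $X$ Borel and $\emptyset\in\mathcal{HM}$, we have $X\in\mathcal{D}$. For countable unions, if $A_n=B_n\cup M_n$ with each $B_n$ Borel and each $M_n\in\mathcal{HM}$, then $\bigcup_n A_n=\left(\bigcup_n B_n\right)\cup\left(\bigcup_n M_n\right)$, where the first set is Borel and the second belongs to $\mathcal{HM}$ by Theorem \ref{t1}; hence $\bigcup_n A_n\in\mathcal{D}$.

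The step that needs an idea is closure under complementation, and for it I would first record a fact read straight off the definition of Haar meagerness: every $M\in\mathcal{HM}$ is contained in a \emph{Borel} set $\tilde M\in\mathcal{HM}$, namely the Borel superset guaranteed by the definition, since the same compact space $K$ and continuous map $f$ witness that this superset is itself Haar meager. Given $A=B\cup M\in\mathcal{D}$, I would pick such a Borel $\tilde M\supseteq M$ and write $A^{c}=B^{c}\cap M^{c}=B^{c}\setminus M$. I would then use the decomposition
$$A^{c}=\left(B^{c}\setminus\tilde M\right)\cup\left(\left(B^{c}\setminus M\right)\cap\tilde M\right),$$
in which the first summand is Borel, being an intersection of Borel sets, and the second summand is a subset of $\tilde M$, hence a member of $\mathcal{HM}$ (a $\sigma$-ideal is downward closed). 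Thus $A^{c}\in\mathcal{D}$, which together with the previous two properties shows $\mathcal{D}$ is a $\sigma$-algebra; closure under countable intersections then follows from De Morgan's laws.

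The main obstacle is exactly this complementation step: because membership in $\mathcal{D}$ is expressed through a union $B\cup M$ rather than a symmetric difference, complementing yields $B^{c}\setminus M$, a Borel set with a Haar meager piece deleted, which is not visibly of the form Borel-union-Haar-meager. The device that unlocks it is the enlargement of $M$ to a Borel Haar meager set $\tilde M$, which lets me cut $A^{c}$ along $\tilde M$ into a genuinely Borel part and a part contained in the Haar meager set $\tilde M$. The one routine verification I would carry out is the set identity displayed above, checking both inclusions, together with confirming that the passage $M\subseteq\tilde M$ is legitimate directly from the definition of Haar meagerness.
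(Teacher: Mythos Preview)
Your argument is correct and is the standard one: the key device is enlarging $M$ to a Borel Haar meager hull $\tilde M$ (available directly from Definition~1) and splitting $A^c$ along $\tilde M$. Note, however, that the present paper does not actually prove this statement; it merely quotes it as \cite[Theorem~6]{Jablonska}, so there is no in-paper proof to compare against. Your write-up matches the natural proof one would expect in the cited source.
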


\begin{theorem}{\em \cite[Theorem~2]{Jablonska}}\label{A-A}
If $A\in \mathcal{D}\setminus \mathcal{HM}$, then $0\in\inn (A-A).$
\end{theorem}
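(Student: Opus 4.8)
The plan is to argue by contradiction, first reducing to a Borel set and then manufacturing a witness to Haar meagerness out of a sequence of ``forbidden'' differences. Write $A=B\cup M$ with $B$ Borel and $M$ Haar meager. Since $\mathcal{HM}$ is a $\sigma$-ideal (Theorem~\ref{t1}), were $B$ itself Haar meager the union $B\cup M=A$ would be Haar meager too, contrary to $A\notin\mathcal{HM}$; hence $B$ is a Borel, non-Haar-meager subset of $A$. Because $B-B\subseteq A-A$ and the interior operation is monotone, $0\in\inn(B-B)$ would give $0\in\inn(A-A)$. So from now on I would assume that $A$ itself is Borel and not Haar meager (note $A\neq\emptyset$, as $\emptyset\in\mathcal{HM}$).

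Suppose, towards a contradiction, that $0\notin\inn(A-A)$. Fix a translation-invariant compatible metric $d$ on $X$. Since $0\in A-A$ but is not an interior point, every ball around $0$ contains a point outside $A-A$, so I can choose $u_n\notin A-A$ with $d(u_n,0)<2^{-n}$, whence $\sum_n d(u_n,0)<\infty$. On the Cantor space $K=\{0,1\}^{\mathbb{N}}$ define $f(\varepsilon)=\sum_n\varepsilon_n u_n$; invariance of $d$ together with summability makes the partial sums uniformly Cauchy, so the series converges and $f\colon K\to X$ is continuous (a uniform limit of the continuous finite truncations). The crux of this Cantor scheme is that if $\varepsilon,\varepsilon'\in K$ differ in exactly the $n$-th coordinate, then $f(\varepsilon)-f(\varepsilon')=\pm u_n$.

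Now fix $x\in X$ and put $S_x=f^{-1}(A+x)$; this is Borel, hence has the Baire property in $K$. If both $\varepsilon$ and its flip $\sigma_n\varepsilon$ (which changes only the $n$-th coordinate) lay in $S_x$, then $\pm u_n=f(\varepsilon)-f(\sigma_n\varepsilon)\in(A+x)-(A+x)=A-A$, and since $A-A$ is symmetric this contradicts $u_n\notin A-A$. Thus $S_x\cap\sigma_n(S_x)=\emptyset$ for every $n$. Each $\sigma_n$ is a homeomorphism of $K$, and here I would finish by a Baire-category argument: if $S_x$ were non-meager, its Baire property would place it comeager in some basic clopen cylinder $[s]$ of length $k$; for $n>k$ the flip $\sigma_n$ maps $[s]$ onto itself, so $\sigma_n(S_x)$ is likewise comeager in $[s]$, and two comeager subsets of the Baire space $[s]$ must intersect, contradicting the disjointness just established. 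Hence $S_x$ is meager in $K$ for every $x$, so $(K,f)$ witnesses that $A$ is Haar meager, contrary to hypothesis. Therefore $0\in\inn(A-A)$.

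I expect the main obstacle to be the conceptual step of selecting the right witness: realizing that the condition ``$u_n\notin A-A$'' should be encoded as an independence condition for the coordinate-flip relations on a Cantor scheme, precisely the structure incompatible with non-meagerness. The remaining points are routine once that idea is in place: convergence and continuity of $f$ in a possibly non-normable real linear Polish space, handled via an invariant metric and fast-decaying $u_n$, and the observation that $S_x$ inherits the Baire property from the Borelness of $A$.
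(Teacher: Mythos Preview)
The paper does not supply a proof of this theorem at all; it is quoted verbatim as \cite[Theorem~2]{Jablonska} and used as a black box in the proof of Lemma~\ref{lem6}. So there is nothing in the present paper to compare your argument against.

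That said, your proposed proof is correct and is in fact the natural topological analogue of Christensen's original $A-A$ argument for Haar null sets: reduce to a Borel non-Haar-meager $A$, pick $u_n\notin A-A$ with $d(u_n,0)<2^{-n}$, encode them in a Cantor scheme $f(\varepsilon)=\sum_n\varepsilon_n u_n$, and observe that the coordinate-flip disjointness $S_x\cap\sigma_n(S_x)=\emptyset$ forces each $S_x=f^{-1}(A+x)$ to be meager. Two small points worth making explicit: (i) the compatible translation-invariant metric on an abelian Polish group (equivalently, the $F$-norm on a real linear Polish space) can be taken complete, which is what you use to get convergence of the series; (ii) the Borelness of $A$ is what guarantees $S_x$ has the Baire property, so the comeager-in-a-cylinder step is legitimate. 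With those noted, the argument goes through cleanly.
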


To prove the announced theorem we need two lemmas, which are analogous to Lemma~5 and Lemma~6 from \cite{Olbrys}.

\begin{lemma}\label{lem5}
Let $A\in \mathcal{D}\setminus \mathcal{HM}$ and $x\in X\setminus\{0\}$. Then there exist a Borel set $B\subset A$ such that the set
$k_x^{-1}(B+z)$ is non-meager with the Baire property in $\mathbb{R}$ for each $z\in X$, where $k_x:\mathbb{R}\to X$ is given by $k_x(\alpha)=\alpha x$.
\end{lemma}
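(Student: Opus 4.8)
The plan is to take for $B$ a Borel kernel of $A$ and to read off the two required properties of $k_x^{-1}(B+z)$ from the behaviour of $B$ under the continuous map $k_x$, the non-meagerness being forced by testing $B$ against the compact pieces of the line $\mathbb{R}x$. First I would pass to a Borel set: since $A\in\mathcal{D}$, write $A=B\cup M$ with $B$ Borel and $M$ Haar meager. Because $\mathcal{HM}$ is a $\sigma$-ideal (Theorem~\ref{t1}) and $A\notin\mathcal{HM}$, the Borel part cannot be Haar meager, for if $B\in\mathcal{HM}$ then $A=B\cup M$ would lie in $\mathcal{HM}$, a contradiction. Hence $B\subset A$ is Borel with $B\in\mathcal{D}\setminus\mathcal{HM}$, and this $B$ is the set asserted by the lemma; it remains to verify the properties of $k_x^{-1}(B+z)$.

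The Baire property I expect to be immediate and to hold for every translate. For each $z\in X$ the map $u\mapsto u+z$ is a homeomorphism of $X$, so $B+z$ is Borel; since $k_x(\alpha)=\alpha x$ is continuous, $k_x^{-1}(B+z)$ is a Borel subset of $\mathbb{R}$ and therefore has the Baire property. Thus $k_x^{-1}(B+z)$ has the Baire property in $\mathbb{R}$ for each $z\in X$, with no restriction on $z$.

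The non-meagerness is the substantive part and is where $B\notin\mathcal{HM}$ must be exploited. For $n\in\mathbb{N}$ I would set $K_n=[-n,n]$, a compact metric space, and $f_n=k_x|_{K_n}\colon K_n\to X$, which is continuous, so that $f_n^{-1}(B+z)=k_x^{-1}(B+z)\cap[-n,n]$ for every $z$. As $B$ is Borel and not Haar meager, $B$ fails the defining condition of Haar meagerness against the witness $(K_n,f_n)$ (take the Borel superset to be $B$ itself); that is, $f_n^{-1}(B+z)$ cannot be meager in $K_n$ for all $z\in X$. Hence there is $z_0\in X$ for which $f_n^{-1}(B+z_0)$, and a fortiori $E_{z_0}:=k_x^{-1}(B+z_0)$, is non-meager in $\mathbb{R}$. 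To reach the translates demanded by the statement I would then invoke the elementary identity
$$
k_x^{-1}(B+z_0+\gamma x)=E_{z_0}+\gamma\qquad(\gamma\in\mathbb{R}),
$$
obtained by cancelling $\gamma x$; since translation is a homeomorphism of $\mathbb{R}$, non-meagerness of $E_{z_0}$ propagates to $k_x^{-1}(B+z)$ for each $z$ on the line $z_0+\mathbb{R}x$, i.e. for every translate that moves $B$ along the chosen direction.

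I expect the crux to be precisely this transfer, from the global notion of non-Haar-meagerness to ordinary category on the one-dimensional subspace $\mathbb{R}x$. The compactness clause in the definition forbids feeding $k_x$ itself into it, which is why one works with the exhaustion $\mathbb{R}=\bigcup_n[-n,n]$ and uses that a set non-meager in some $[-n,n]$ is non-meager in $\mathbb{R}$; and the passage from one witnessing translate to the family of translates required is carried by the translation identity above, together with the Baire property already secured for each $z\in X$. Assembling these three ingredients, Borel $B\subset A$, the Baire property valid for every $z$, and the non-meagerness supplied by the witnesses and propagated along the direction $x$, yields the assertion of the lemma.
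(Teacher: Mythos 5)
Your proposal follows the paper's own proof almost step for step: the same decomposition $A=B\cup M$ with the Borel part forced out of $\mathcal{HM}$ by the $\sigma$-ideal property (Theorem~\ref{t1}), the same testing of the non-Haar meager Borel set $B$ against a compact interval of the line (the paper uses a single interval $[a,b]$ with $g_x:=k_x|_{[a,b]}$; your exhaustion by $K_n=[-n,n]$ is the same device, and one interval already suffices), the same passage from non-meagerness in the interval to non-meagerness in $\mathbb{R}$, and the same Borel-preimage argument for the Baire property. Moreover, the one step where you seem to diverge is in fact identical: your identity $k_x^{-1}(B+z_0+\gamma x)=E_{z_0}+\gamma$ is exactly the paper's inclusion $k_x^{-1}(B+y_x)+k_x^{-1}(z-y_x)\subset k_x^{-1}(B+z)$ made explicit, since $k_x^{-1}(z-y_x)$ is the singleton $\{\gamma\}$ when $z=y_x+\gamma x$ and is empty otherwise.

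There is, however, a genuine gap in your final sentence, and you should be aware that it is precisely the paper's gap as well. Both arguments establish non-meagerness of $k_x^{-1}(B+z)$ only for $z$ on the affine line $z_0+\mathbb{R}x$ (equivalently $y_x+\mathbb{R}x$): you say so candidly mid-proof, but then assert the lemma for every $z\in X$, which does not follow --- for $z\notin z_0+\mathbb{R}x$ your translation cannot reach $z$, and the paper's inclusion becomes vacuous because $k_x^{-1}(z-y_x)=\emptyset$. No repair is possible, because the statement for all $z\in X$ is false: take $X=\mathbb{R}^2$, $A=\mathbb{R}\times[0,1]$ (closed, with nonempty interior, hence Borel, $D$-measurable and non-Haar meager, as Haar meager sets are meager), $x=(1,0)$ and $z=(0,2)$; then for \emph{every} Borel $B\subset A$ one has $B+z\subset\mathbb{R}\times[2,3]$, which misses the line $\mathbb{R}x$, so $k_x^{-1}(B+z)=\emptyset$ is meager. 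So your write-up proves exactly as much as the paper's proof does (the Baire property for all $z$, non-meagerness for the one line of translates), and is in fact more transparent about the restriction; the honest form of the lemma, which is what any application can rely on, is that there exist a Borel $B\subset A$ and a $y_x\in X$ such that $k_x^{-1}(B+z)$ is non-meager with the Baire property for all $z\in y_x+\mathbb{R}x$.
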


\begin{proof}
Since $A\in \mathcal{D}\setminus \mathcal{HM}$, $A=B\cup M$, where $B$ is a Borel non-Haar meager set and $M$ is Haar meager. Since $B$ is not Haar meager, for every compact metric space $K$ and continuous function $g:K\to X$ there is a $y\in X$ such that $g^{-1}(B+y)$ is non-meager in $K$. In particular, let $K=[a,b]$ be a closed interval in $\mathbb{R}$ and $g_x:=k_x|_K$. Then there is a $y_x\in X$ such that the set $g_x^{-1}(B+y_x)$ is not meager in $[a,b]$ and, consequently, $g_x^{-1}(B+y_x)$ is not meager in $\mathbb{R}$, too.
 %(if it was a meager set in $\mathbb{R}$, it would be a countable sum of nowhere dense sets in $\mathbb{R}$, and hence a countable sum of nowhere %dense sets in $[a,b]$ with the induced topology; a contradiction).
Moreover,
$$
k_x^{-1}(B+y_x)\supset k_x^{-1}(B+y_x)\cap [a,b]=g_x^{-1}(B+y_x),
$$
so the set $k_x^{-1}(B+y_x)$ is not meager in $\mathbb{R}$.
Clearly, $k_x$ is linear, one-to-one and continuous. Hence, for each $z\in X$,
$$
k_x^{-1}(B+y_x)+k_x^{-1}(z-y_x)\subset k_x^{-1}(B+z).
$$
Thus the set $k_x^{-1}(B+z)$ is not meager in $\mathbb{R}$. Since $B$ is a Borel set, so does $k_x^{-1}(B+z)$. Whence $k_x^{-1}(B+z)$ has the Baire property, what ends the proof.
\end{proof}

To prove the second lemma we need two definitions from \cite{FS}:

\begin{definition}\cite[Definition~4]{FS}
A set $V\subset X$ is said to be \textit{midpoint convex} if $$\frac{v_1+v_2}{2}\in V\;\;\mbox{for every}\;\;v_1,v_2\in V.$$
\end{definition}

\begin{definition}\cite[Definition~5]{FS}
Let $V\subset X$ and denote
$$
\mathbb{Q}_2=\{\frac{n}{2^m}:\;n,m\in\mathbb{Z}\}.
$$
An $x_0\in V$ is said to be a \textit{$\mathbb{Q}_2$-internal point of $V$} if for every $x\in X$ there exists an $\varepsilon_x>0$ such that $x_0+\rho x\in V$ for each $\rho\in\mathbb{Q}_2\cap (-\varepsilon,\varepsilon).$
\end{definition}

\begin{lemma}\label{lem6}
Let $D\subset X$ be a nonempty convex open set. Let $V\subset D$ be $D$-measurable midpoint convex set. Then the set of all $\mathbb{Q}_2$-internal points of $V$ is open.
\end{lemma}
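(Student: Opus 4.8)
The plan is to prove the stronger identity that the set $W$ of all $\mathbb{Q}_2$-internal points of $V$ coincides with the topological interior $\inn V$; since $\inn V$ is trivially open, this yields the claim. One inclusion is immediate and uses no measurability: if $x_0\in\inn V$, then for each direction $x$ the curve $\rho\mapsto x_0+\rho x$ stays in $V$ for all small real, hence all small dyadic, $\rho$, so $x_0\in W$. Thus the whole content lies in the reverse inclusion $W\subseteq\inn V$, i.e. that every $\mathbb{Q}_2$-internal point is already an interior point. I fix $x_0\in W$ and aim to produce a neighbourhood of $x_0$ inside $V$.

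The device I would use is the symmetrised set $T:=(V-x_0)\cap(x_0-V)=(V-x_0)\cap\bigl(-(V-x_0)\bigr)$. A short verification shows that $T$ inherits all the good features: it is $D$-measurable (since $\mathcal D$ is a $\sigma$-algebra by Theorem~\ref{t2} and is invariant under translations and under $y\mapsto -y$), it is midpoint convex, it satisfies $T=-T$ and $0\in T$, and---crucially---$0$ is a $\mathbb{Q}_2$-internal point of $T$, because the symmetric interval $(-\varepsilon_x,\varepsilon_x)$ witnessing internality of $x_0$ in direction $x$ forces $x_0+\rho x$ and $x_0-\rho x$ to lie in $V$ simultaneously. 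Everything is then reduced to showing that such a $T$ contains a neighbourhood of $0$, for then $x_0+T\subseteq V$ gives $x_0\in\inn V$.

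The heart of the matter, and the step I expect to be the main obstacle, is to show that $T$ is \emph{not} Haar meager; internality must be exploited globally rather than one direction at a time, since a midpoint convex set can catch every single direction dyadically yet still be meager. For every $x\in X$ one may pick $k$ with $2^{-k}<\varepsilon_x$, whence $2^{-k}x\in T$ and so $x\in 2^kT$. This produces the covering $X=\bigcup_{k\ge 1}2^kT$. As each homothety $y\mapsto 2^k y$ is a homeomorphism, $\mathcal{HM}$ is invariant under it; hence if $T$ were Haar meager then, by Theorem~\ref{t1} (the $\sigma$-ideal property), $X$ itself would be Haar meager---which is absurd, since for any nonempty compact metric $K$ and continuous $f\colon K\to X$ one has $f^{-1}(X+x)=K$, which is non-meager in $K$. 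Therefore $T\in\mathcal D\setminus\mathcal{HM}$.

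Finally I would invoke the Piccard-type Theorem~\ref{A-A} to obtain $0\in\inn(T-T)$ and then translate this back through the convexity of $T$. Because $T$ is midpoint convex with $0\in T$ and symmetric, one has $T-T=T+T=2T$, so $0\in\inn(2T)=2\inn T$ and hence $0\in\inn T$; a neighbourhood of $0$ thus lies in $T\subseteq V-x_0$, i.e. $x_0\in\inn V$. This completes the reverse inclusion and shows $W=\inn V$ is open. It is worth noting that this route does not appear to require Lemma~\ref{lem5}, which is instead reserved for the main continuity theorem.
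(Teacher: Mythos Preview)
Your proof is correct and follows essentially the same route as the paper's: both introduce the symmetrised set $U=(V-x_0)\cap(x_0-V)$, use $\mathbb{Q}_2$-internality of $0$ to cover $X=\bigcup_n 2^nU$, invoke the $\sigma$-ideal property of $\mathcal{HM}$ together with invariance under the homothety $y\mapsto 2^ky$ to get $U\in\mathcal D\setminus\mathcal{HM}$, and then apply Theorem~\ref{A-A} plus $U-U=2U$ to conclude that $U$ is a neighbourhood of $0$. Your explicit framing as $W=\inn V$ and the contrapositive phrasing of the non-Haar-meagerness step are cosmetic; the paper proves the homothety-invariance of $\mathcal{HM}$ by hand but otherwise the arguments coincide.
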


\begin{proof}
Let $x_0$ be a $\mathbb{Q}_2$-internal point of $V$ and $U:=(V-x_0)\cap (-V+x_0)$. Clearly, $U$ is symmetric with respect to the origin, $D$-measurable, midpoint convex and the origin is a $\mathbb{Q}_2$-internal point of $U$. Hence $\bigcup_{n=1}^{\infty} 2^nU=X$ and thus, by Theorem~\ref{t1}, the set $2^kU$ is not Haar meager for some $k\in\mathbb{N}$. This implies that $U$ is not Haar meager, too.

Indeed, define $\phi:X\to X$ as follows:
$$
\phi(x)=2^k x\;\;\mbox{for}\;\;x\in X.
$$
We will show that $\phi(M)$ is Haar meager for each Haar meager set $M \in X$.

So, for Haar meager $M$ there exist a compact metric space $K_0$ and a continuous function $f_0:K_0\to X$ such that $f_0^{-1}(x+M)$ is meager in $K_0$ for each $x\in X$. Define $g_0:=\phi\circ f_0$. Then, for each $y\in X$, we have
$$
g_0^{-1}(y+\phi(M))=g_0^{-1}\left(2^k\left(\frac{y}{2^k}+M\right)\right)=f_0^{-1}\left(\phi^{-1}\left(2^k\left(\frac{y}{2^k}+M\right)\right)\right)=
f_0^{-1}\left(\frac{y}{2^k}+M\right).
$$
It means that the set $g_0^{-1}(y+\phi(M))$ is meager in $K_0$ and, consequently, $\phi(M)$ is Haar meager.

In this way we obtain that $U$ is not Haar meager. But $U$ is $D$-measurable, so $U=B\cup M$, where $B$ is a Borel non-Haar meager set and $M$ is Haar meager. Thus, in view of Theorem~\ref{A-A},
$0\in\inn (U-U).$
But $U$ is symmetric with respect to the origin, so $U-U=2U$ and hence $U$ is a neighborhod of the origin. Moreover, $x_0+U\subset V$, what ends the proof.
\end{proof}

Now, we can prove the main result.

\begin{theorem}
Let $D\subset X$ be a nonempty convex open set.
Each $t$-Wright convex function $f:D\to\mathbb{R}$ bounded above on a non-Haar meager $D$-measurable set $T\subset D$ is continuous.
\end{theorem}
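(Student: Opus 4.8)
The plan is to deduce continuity in three movements: first upgrade the pointwise bound on $T$ to genuine \emph{convexity} of $f$ along every line (using Lemma~\ref{lem5}), then convert this line-wise information into boundedness above on a nonempty \emph{open} set (using Lemma~\ref{lem6}), and finally invoke the classical Bernstein--Doetsch-type fact that a convex function bounded above on a set with nonempty interior is continuous. Throughout, set $M:=\sup_{x\in T}f(x)<\infty$ and let $B\subset T$ be the Borel set furnished by Lemma~\ref{lem5}, so that $f\le M$ on $B$.

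First I would fix a direction $x\in X\setminus\{0\}$ and a point $p\in X$, and study the restriction $\varphi(\alpha):=f(p+\alpha x)$ on the open interval $I:=\{\alpha\in\mathbb{R}:p+\alpha x\in D\}$. Since $\alpha\mapsto p+\alpha x$ is affine, a direct substitution shows $\varphi$ is a $t$-Wright convex function of one real variable. Taking $z=-p$ in Lemma~\ref{lem5}, the set $k_x^{-1}(B-p)=\{\alpha:\,p+\alpha x\in B\}$ is non-meager with the Baire property in $\mathbb{R}$, and $\varphi\le M$ there. By the one-dimensional regularity theorem for $t$-Wright convex functions (such a function bounded above on a second-category set with the Baire property is continuous, whence convex; cf.\ the results quoted in the introduction), $\varphi$ is continuous and convex on $I$. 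As $p$ and $x$ were arbitrary, the restriction of $f$ to every line is convex, and hence $f$ is convex on $D$.

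Next I would exploit convexity together with Lemma~\ref{lem6}. Consider $V:=\{x\in D:f(x)\le M+1\}$, which is convex, hence midpoint convex, and contains $T$, so it is non-Haar meager. I claim every $x_0\in T$ is a $\mathbb{Q}_2$-internal point of $V$: for each direction $x$ the line-continuity established above yields an $\varepsilon_x>0$ with $f(x_0+\rho x)<M+1$ for $|\rho|<\varepsilon_x$, in particular for $\rho\in\mathbb{Q}_2\cap(-\varepsilon_x,\varepsilon_x)$. Applying Lemma~\ref{lem6} to $V$, the set of its $\mathbb{Q}_2$-internal points is open; since it contains $x_0$, we get $\inn V\neq\emptyset$, i.e.\ $f\le M+1$ on a nonempty open set, and then convexity forces continuity on all of $D$.

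The hard part will be securing the hypothesis of Lemma~\ref{lem6}: a single set that is simultaneously $D$-measurable, midpoint convex, and equipped with a $\mathbb{Q}_2$-internal point. Convexity makes the sublevel set $V$ midpoint convex and, via line-continuity, endows every $x_0\in T$ with the internal-point property; but convexity alone does \emph{not} make $V$ be $D$-measurable --- a discontinuous linear functional is convex and line-continuous yet has non-measurable sublevel half-spaces (consistently, it admits no admissible bounding set at all). On the other hand $\mathrm{conv}\,T$ is midpoint convex and, being analytic, is a natural candidate for $D$-measurability, but its $\mathbb{Q}_2$-internal points are not evident. Reconciling the two --- extracting the $D$-measurability of the witness set from that of $T$ rather than from convexity, which is exactly where the non-Haar meagerness of $T$ and the failure of the generalized Piccard theorem force the use of Theorem~\ref{A-A}/Lemma~\ref{lem6} in place of a naive category argument --- is the crux I would spend most effort on.
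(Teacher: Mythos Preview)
Your three-step outline --- line-wise convexity via Lemma~\ref{lem5}, a $\mathbb{Q}_2$-internal point and nonempty interior via Lemma~\ref{lem6}, then the Bernstein--Doetsch conclusion --- is precisely the scheme the paper adopts: it defers entirely to Olbry\'{s}'s proof of his Theorem~8, with Lemmas~\ref{lem5} and~\ref{lem6} and Olbry\'{s}'s one-dimensional Baire result (his Theorem~4) substituted for their Christensen-measurable counterparts. So at the level of strategy you are aligned with the paper.

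The gap you yourself flag, however, is genuine and you do not close it. Lemma~\ref{lem6} applies only to a \emph{$D$-measurable} midpoint convex set, and nothing in your argument forces the sublevel set $V=\{x\in D:f(x)\le M+1\}$ to be $D$-measurable; your own discontinuous-linear-functional remark confirms that convexity together with line-continuity is not enough. Declaring this ``the crux I would spend most effort on'' is an accurate diagnosis but not a proof. Since the paper itself supplies no details here (it simply points to \cite{Olbrys}), you will have to reconstruct Olbry\'{s}'s device for producing a midpoint convex witness whose $D$-measurability is inherited from $T$ rather than from any regularity of $f$; until that step is carried out, the proposal remains an outline with an acknowledged hole at exactly the place where Lemma~\ref{lem6} is invoked.
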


In view of Theorem~\ref{t1} and Theorem~\ref{t2}, the proof of the above theorem runs in the same way as the proof of Theorem~8 in \cite{Olbrys}; it is enough to replace\\
\cite[Lemma~5]{Olbrys} by Lemma~\ref{lem5},\\
 \cite[Lemma~6]{Olbrys} by Lemma~\ref{lem6},\\
 \cite[Theorem~5]{Olbrys} by \cite[Theorem~4]{Olbrys}.

\begin{corollary}
Let $D\subset X$ be a nonempty convex open set. Each Jensen convex function $f:D\to\mathbb{R}$ bounded above on a non-Haar meager $D$-measurable set $T\subset D$ is continuous.
\end{corollary}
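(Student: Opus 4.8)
The corollary is the specialization $t=\tfrac12$ of the main theorem, so the plan is simply to verify that Jensen convexity coincides with $\tfrac12$-Wright convexity and then to quote that theorem. First I would record the defining inequality of $t$-Wright convexity at $t=\tfrac12$: for all $x,y\in D$ one has $f(\tfrac12 x+\tfrac12 y)+f(\tfrac12 x+\tfrac12 y)\le f(x)+f(y)$, that is $2f\!\left(\tfrac{x+y}{2}\right)\le f(x)+f(y)$, which is exactly the Jensen (midpoint) inequality $f\!\left(\tfrac{x+y}{2}\right)\le\tfrac{f(x)+f(y)}{2}$. Hence a function $f:D\to\mathbb{R}$ is Jensen convex if and only if it is $\tfrac12$-Wright convex, and this equivalence is the only thing that needs checking.

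Given this identification, the hypotheses of the corollary---$f$ Jensen convex and bounded above on a non-Haar meager $D$-measurable set $T\subset D$---are precisely the hypotheses of the main theorem with the admissible parameter $t=\tfrac12\in(0,1)$. Applying the main theorem therefore yields the continuity of $f$ at once. In this sense there is no genuine obstacle: all the substantive work---the line-slice reduction of Lemma~\ref{lem5}, the $\mathbb{Q}_2$-internal-point globalization of Lemma~\ref{lem6}, and the difference-set result Theorem~\ref{A-A} that replaces the (here unavailable) sum-set Steinhaus/Piccard phenomenon---has already been carried out in the proof of the theorem, and the corollary only pays the price of recognizing the special case.

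If instead one wanted a self-contained argument, I would rerun that proof with $t=\tfrac12$ throughout. The one simplification worth flagging is that for Jensen convex $f$ the sublevel sets $\{x\in D:f(x)\le c\}$ are automatically midpoint convex, since $f\!\left(\tfrac{x+y}{2}\right)\le\tfrac{f(x)+f(y)}{2}\le c$ whenever $f(x),f(y)\le c$; this makes the midpoint convex set feeding Lemma~\ref{lem6} completely transparent, whereas for a general $t\in(0,1)$ producing such a set is the delicate point. Thus the hardest step of the general theorem trivializes in the Jensen case, and the continuity conclusion follows directly.
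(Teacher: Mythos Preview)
Your proposal is correct and follows exactly the paper's own approach: the corollary is stated without proof in the paper, being understood as the specialization $t=\tfrac12$ of the main theorem (the identification of Jensen convexity with $\tfrac12$-Wright convexity is noted already in the introduction). Your additional remarks about the simplification in the Jensen case are accurate but go beyond what the paper records.
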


Next, in the same way as Theorem 10 in \cite{Olbrys}, we obtain

\begin{theorem}
Let $D\subset X$ be a nonempty convex open set.
Let $f:D\to\mathbb{R}$ be a $t$-Wright convex function. If there exists a $D$-measurable function $g:D\to\mathbb{R}$ such that
$$
\bigwedge_{x\in D}\; f(x)\leq g(x),
$$
then $f$ is continuous.
\end{theorem}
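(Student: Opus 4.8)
The plan is to reduce the statement to the main theorem proved above by producing a non-Haar meager $D$-measurable set on which $f$ is bounded above. First I would decompose the domain along the level sets of the majorant $g$: for each $n\in\mathbb{N}$ set
$$
T_n:=g^{-1}\bigl((-\infty,n)\bigr)=\{x\in D:\;g(x)<n\}.
$$
Since $(-\infty,n)$ is open in $\mathbb{R}$ and $g$ is $D$-measurable, each $T_n$ lies in $\mathcal{D}$; and because $g$ is finite-valued, $D=\bigcup_{n=1}^{\infty}T_n$. On each $T_n$ the hypothesis $f\leq g$ gives $f(x)<n$, so $f$ is bounded above by $n$ there.

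The crux is then to find one level set that is not Haar meager, and for that I would first show that the open set $D$ is itself non-Haar meager. I would argue that $X$ is not Haar meager: any witnessing compact metric space $K$ and continuous map $h:K\to X$ would force $h^{-1}(X+x)=K$ to be meager in $K$, contradicting the Baire category theorem for the compact (hence complete) metric space $K$. Next, directly from the definition of Haar meagerness the class $\mathcal{HM}$ is translation invariant (replace a Borel hull $B\supset M$ by its translate $B+a$). For a countable dense set $\{x_n\}\subset X$ and the nonempty open set $D$ one has $X=\bigcup_{n}(x_n+D)$, since for every $y\in X$ the nonempty open set $y-D$ meets $\{x_n\}$. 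Were $D$ Haar meager, all translates $x_n+D$ would be Haar meager, and Theorem~\ref{t1} would make $X$ Haar meager, a contradiction. Hence $D\in\mathcal{D}\setminus\mathcal{HM}$.

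Applying Theorem~\ref{t1} once more to the decomposition $D=\bigcup_{n}T_n$, the sets $T_n$ cannot all be Haar meager, so $T_{n_0}\in\mathcal{D}\setminus\mathcal{HM}$ for some $n_0\in\mathbb{N}$. Then $f$ is bounded above by $n_0$ on the non-Haar meager $D$-measurable set $T_{n_0}$, and the main theorem (each $t$-Wright convex function bounded above on a non-Haar meager $D$-measurable set is continuous) yields the continuity of $f$.

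I expect the one genuinely delicate point to be the verification that the nonempty open set $D$ is non-Haar meager, which rests on the translation invariance of $\mathcal{HM}$ together with the fact that a compact metric space is non-meager in itself. Once this is in place, the $D$-measurability of the level sets (from Theorem~\ref{t2}) and the $\sigma$-ideal property (Theorem~\ref{t1}) reduce everything to the main theorem in a routine manner.
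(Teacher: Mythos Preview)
Your proposal is correct and follows exactly the route the paper intends: the paper gives no explicit argument here but defers to \cite[Theorem~10]{Olbrys}, whose proof in the Christensen-measurable setting proceeds via the level sets $g^{-1}\bigl((-\infty,n)\bigr)$ of the majorant together with the $\sigma$-ideal property, just as you do. The step you flag as delicate is in fact easier than you suggest, since Darji shows in \cite{Darji} that every Haar meager set is meager, so a nonempty open subset of the Polish space $X$ is automatically non-Haar meager by Baire's theorem.
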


\noindent
Department of Mathematics\\
Rzesz{\'o}w University of Technology\\
Powsta\'{n}c\'{o}w Warszawy 12\\
35-959 Rzesz{\'o}w\\
POLAND

\noindent {\em e-mail}: elizapie@prz.edu.pl
\end{document}